\newcommand{\cf}{\textit{cf.\ }} 
\newcommand{\Iverson}[1]{\ensuremath{\left[#1\right]_{\delta}}} 
\title[Arithmetic Functions Generated by Lambert Series Factorizations]{
       Generating Special Arithmetic Functions by Lambert Series Factorizations
} 
\author[Mircea Merca and Maxie D. Schmidt]{
        Mircea Merca \\ 
        Academy of Romanian Scientists \\ 
        Splaiul Independentei 54, Bucharest, 050094 Romania \\ 
        \href{mailto:mircea.merca@profinfo.edu.ro}{mircea.merca@profinfo.edu.ro} \\ 
        \\ 
        Maxie D. Schmidt \\ 
        School of Mathematics \\ 
        Georgia Institute of Technology \\ 
        Atlanta, GA 30332 USA \\ 
        \href{mailto:maxieds@gmail.com}{maxieds@gmail.com}
} 
\date{\today} 
\keywords{Lambert series; factorization theorem; matrix factorization; partition function}
\subjclass[2010]{11A25; 11P81; 05A17; 05A19}
\theoremstyle{plain} 
\newtheorem{theorem}{Theorem}
\newtheorem{conjecture}[theorem]{Conjecture}
\newtheorem{prop}[theorem]{Proposition}
\newtheorem{cor}[theorem]{Corollary}
\numberwithin{theorem}{section}
\theoremstyle{definition} 
\newtheorem{remark}[theorem]{Remark}
\renewcommand\@biblabel[1]{#1.}
\begin{document} 

\begin{abstract} 
We summarize the known useful and interesting results and formulas 
we have discovered so far in this collaborative article summarizing 
results from two related articles by Merca and Schmidt arriving at 
related so-termed Lambert series factorization theorems. 
We unify the matrix representations that 
underlie two of our separate papers, and which commonly arise in identities 
involving partition functions and other functions generated by Lambert series. 
We provide a number of properties and conjectures related to the inverse matrix 
entries defined in Schmidt's article and the Euler partition function $p(n)$ 
which we prove through our new results unifying the expansions of the 
Lambert series factorization theorems within this article. 
\end{abstract}

\maketitle

\section{Introduction} 

\subsection{Lambert series factorization theorems} 

We consider recurrence relations and matrix equations 
related to \emph{Lambert series} expansions of the form 
\cite[\S 27.7]{NISTHB} \cite[\S 17.10]{HARDYANDWRIGHT} 
\begin{align}
\label{eqn_LambertSeriesfb_def} 
\sum_{n \geq 1} \frac{a_n q^n}{1-q^n} & = \sum_{m \geq 1} b_m q^m,\ |q| < 1, 
\end{align} 
for prescribed arithmetic functions 
$a: \mathbb{Z}^{+} \rightarrow \mathbb{C}$ and 
$b: \mathbb{Z}^{+} \rightarrow \mathbb{C}$ where $b_m = \sum_{d | m} a_d$. 
There are many well-known Lambert series for special arithmetic functions of the 
form in \eqref{eqn_LambertSeriesfb_def}. 
Examples include the following series where $\mu(n)$ denotes the 
\emph{M\"obius function}, $\phi(n)$ denotes \emph{Euler's phi function}, 
$\sigma_{\alpha}(n)$ denotes the generalized \emph{sum of divisors function}, 
$\lambda(n)$ denotes \emph{Liouville's function}, $\Lambda(n)$ denotes 
\emph{von Mangoldt's function}, $\omega(n)$ defines the number of 
distinct primes dividing $n$, and $J_t(n)$ is 
\emph{Jordan's totient function} for a fixed $t \in \mathbb{C}$ 
\cite[\S 27.6 -- \S 27.7]{NISTHB}\footnote{ 
     \underline{\emph{Notation}}: 
     \emph{Iverson's convention} compactly specifies 
     boolean-valued conditions and is equivalent to the 
     \emph{Kronecker delta function}, $\delta_{i,j}$, as 
     $\Iverson{n = k} \equiv \delta_{n,k}$. 
     Similarly, $\Iverson{\mathtt{cond = True}} \equiv 
                 \delta_{\mathtt{cond}, \mathtt{True}}$ 
     in the remainder of the article. 
}: 
\begin{align} 
\label{eqn_WellKnown_LamberSeries_Examples} 
\sum_{n \geq 1} \frac{\mu(n) q^n}{1-q^n} & = q, && 
     (a_n, b_n) := (\mu(n), \Iverson{n = 1}) \\ 
\notag
\sum_{n \geq 1} \frac{\phi(n) q^n}{1-q^n} & = \frac{q}{(1-q)^2}, && 
     (a_n, b_n) := (\phi(n), n) \\ 
\notag
\sum_{n \geq 1} \frac{n^{\alpha} q^n}{1-q^n} & =  
     \sum_{m \geq 1} \sigma_{\alpha}(n) q^n, && 
     (a_n, b_n) := (n^{\alpha}, \sigma_{\alpha}(n)) \\ 
\notag
\sum_{n \geq 1} \frac{\lambda(n) q^n}{1-q^n} & = \sum_{m \geq 1} q^{m^2}, && 
     (a_n, b_n) := (\lambda(n), \Iverson{\text{$n$ is a positive square}}) \\ 
\notag 
\sum_{n \geq 1} \frac{\Lambda(n) q^n}{1-q^n} & = \sum_{m \geq 1} \log(m) q^m, && 
     (a_n, b_n) := (\Lambda(n), \log n) \\ 
\notag 
\sum_{n \geq 1} \frac{|\mu(n)| q^n}{1-q^n} & = \sum_{m \geq 1} 2^{\omega(m)} q^m, && 
     (a_n, b_n) := (|\mu(n)|, 2^{\omega(n)}) \\ 
\notag 
\sum_{n \geq 1} \frac{J_t(n) q^n}{1-q^n} & = \sum_{m \geq 1} m^t q^m, && 
     (a_n, b_n) := (J_t(n), n^t). 
\end{align}
In this article, our new results and conjectures extend and unify the related 
Lambert series factorization theorems considered in two separate contexts in 
the references \cite{MERCA-LSFACTTHM,SCHMIDT-LSFACTTHM}. 
In particular, in \cite{MERCA-CIRCND} Merca notes that 
\begin{align*} 
\sum_{n \geq 1} \frac{q^n}{1 \pm q^n} & = \frac{1}{(\mp q; q)_{\infty}} 
     \sum_{n \geq 1} \left(s_o(n) \pm s_e(n)\right) q^n, 
\end{align*} 
where $s_o(n)$ and $s_e(n)$ respectively denote the number of parts in all 
partitions of $n$ into an odd (even) number of distinct parts. 
More generally, Merca \cite{MERCA-LSFACTTHM} proves that 
\begin{align} 
\label{eqn_Merca_LSFactorizationThm} 
\sum_{n \geq 1} \frac{a_n q^n}{1 \pm q^n} & = 
     \frac{1}{(\mp q; q)_{\infty}} \sum_{n \geq 1} \left(\sum_{k=1}^n 
     \left(s_o(n, k) \pm s_e(n, k)\right) a_k\right) q^n, 
\end{align} 
where $s_o(n, k)$ and $s_e(n, k)$ are respectively the number of $k$'s in all 
partitions of $n$ into an odd (even) number of distinct parts. 

\begin{table}[ht!] 

\caption{The bottom row sequences in the matrices, $A_n^{-1}$, in the 
         definition of \eqref{eqn_AnInv_BlockMatrixRecForm} on page 
         \pageref{eqn_AnInv_BlockMatrixRecForm} for $2 \leq n \leq 18$.}
\label{table_matrix_An_last_row_seqs} 

\begin{center}
\begin{equation*}
\boxed{
\begin{array}{|c||l|} \hline
n & r_{n,n-1}, r_{n,n-2}, \ldots, r_{n,1} \\ \hline 
2 & 1 \\ 
3 & 1, 1 \\ 
4 & 2, 1, 1 \\ 
5 & 4, 3, 2, 1 \\ 
6 & 5, 3, 2, 2, 1 \\ 
7 & 10, 7, 5, 3, 2, 1 \\ 
8 & 12, 9, 6, 4, 3, 2, 1 \\ 
9 & 20, 14, 10, 7, 5, 3, 2, 1 \\ 
10 & 25, 18, 13, 10, 6, 5, 3, 2, 1 \\ 
11 & 41, 30, 22, 15, 11, 7, 5, 3, 2, 1 \\ 
12 & 47, 36, 26, 19, 14, 10, 7, 5, 3, 2, 1 \\ 
13 & 76, 56, 42, 30, 22, 15, 11, 7, 5, 3, 2, 1, 1 \\ 
14 & 90, 69, 51, 39, 28, 21, 14, 11, 7, 5, 3, 2, 1, 1  \\ 
15 & 129, 97, 74, 55, 41, 30, 22, 15, 11, 7, 5, 3, 2, 1, 1  \\ 
16 & 161, 124, 94, 72, 53, 40, 29, 21, 15, 11, 7, 5, 3, 2, 1, 1  \\ 
17 & 230, 176, 135, 101, 77, 56, 42, 30, 22, 15, 11, 7, 5, 3, 2, 1, 1  \\ 
18 & 270, 212, 163, 126, 95, 73, 54, 41, 29, 22, 15, 11, 7, 5, 3, 2, 1, 1  \\ 
\hline
\end{array} 
}
\end{equation*} 
\end{center} 

\end{table} 

\subsection{Matrix equations for the arithmetic functions generated by Lambert series} 

We then define the invertible $n \times n$ square matrices, $A_n$, as in 
Schmidt's article according to the 
convention from Merca's article as \cite[\cf \S 1.2]{SCHMIDT-LSFACTTHM} 
\begin{align} 
\label{eqn_An_SquareMatrix_def} 
A_n & := \left(s_e(i, j) - s_o(i, j)\right)_{1 \leq i,j \leq n}, 
\end{align} 
where the entries, $s_{i,j} := s_e(i, j) - s_o(i, j)$, of these matrices 
are generated by \cite[Cor.\ 4.3]{MERCA-LSFACTTHM} 
\begin{align*} 
s_{i,j} & = s_e(i, j) - s_o(i, j) = [q^i] \frac{q^j}{1-q^j} (q; q)_{\infty}. 
\end{align*} 
We then have formulas for the Lambert series arithmetic functions, $a_n$, in 
\eqref{eqn_LambertSeriesfb_def} and in the special cases from 
\eqref{eqn_WellKnown_LamberSeries_Examples} for all $n \geq 1$ given by 
\begin{align} 
\label{eqn_fn_matrix_eqn}
\begin{bmatrix} a_1 \\ a_2 \\ \vdots \\ a_n \end{bmatrix} & = 
     A_n^{-1} \left( 
     \underset{:= B_{b,m}}{\underbrace{
     b_{m+1} - \sum_{s = \pm 1} \sum_{k=1}^{\lfloor \frac{\sqrt{24m+1}-s}{6} \rfloor} 
     (-1)^{k+1} b_{m+1-k(3k+s)/2}}} 
     \right)_{0 \leq m < n}. 
\end{align} 
In general, for all $n \geq 2$ we have recursive formulas for the inverse matrices 
defined by \eqref{eqn_An_SquareMatrix_def} expanded in the form of 
\begin{align} 
\label{eqn_AnInv_BlockMatrixRecForm} 
A_{n+1}^{-1} & = 
     \left[
     \begin{array}{c|c} 
     A_n^{-1} & \mathbf{0} \\ \hdashline 
     r_{n+1,n}, \ldots, r_{n+1, 1} & 1 
     \end{array}
     \right], 
\end{align} 
where the first several special cases of the sequences, 
{\small$\{r_{n,n}, r_{n,n-1}, \ldots, r_{n,1}\}$}, are given as in 
\cite{SCHMIDT-LSFACTTHM} by Table \ref{table_matrix_An_last_row_seqs}. 

Within this article we focus on the properties of the entries, $s_{i,j}^{(-1)}$, 
of the inverse matrices, $A_n^{-1}$, defined by \eqref{eqn_An_SquareMatrix_def}. 
We prove several new exact recurrence relations and an expansion of an 
exact formula for the inverse matrices in the previous equation in the 
results of Section \ref{Section_ExactRecFormulas}. 
In Section \ref{Section_SomeConjs} we readily 
computationally conjecture and prove that 
\begin{align*} 
s_{n,k}^{(-1)} & := \sum_{d|n} p(d-k) \mu(n / d)
\end{align*} 
where $p(n) = [q^n] (q; q)_{\infty}^{-1}$ denotes 
\emph{Euler's partition function}. 
This key conjecture immediately implies the results in 
Corollary \ref{cor_ExactFormulas_SpArithFns}. 
More precisely, the corollary provides exact finite divisor sum 
formulas for the special cases of \eqref{eqn_fn_matrix_eqn}
corresponding to the special arithmetic functions 
in \eqref{eqn_WellKnown_LamberSeries_Examples}. 

\subsection{Significance of our new results and conjectures} 

Questions involving divisors of an integer have been studied for millennia
and they underlie the deepest unsolved problems in number theory and related fields. 
The study of partitions, i.e., the ways to write a positive integer
as a sum of positive integers, is much younger, with Euler considered to be the
founder of the subject. The history of both subjects is rich and interesting
but in the interest of brevity we will not go into it here.

The two branches of number theory, additive and multiplicative, turn out
to be related in many interesting ways. Even though there are a number of 
important results connecting the theory of divisors with that of partitions, 
these are somewhat scattered in their approach. There seem to be many other 
connections, in particular in terms of different convolutions involving 
these functions, waiting to be discovered. We propose to continue the study 
of the relationship between divisors and partitions with the goal of identifying 
common threads and hopefully unifying the underlying theory. Moreover, 
it appears that, on the multiplicative number theory side, these connections 
can be extended to other important number theoretic functions such as 
Euler's totient function, Jordan's totient function, Liouville's function, 
the M\"obius function, and von Mangold's function, among others.

Our goal is to establish a unified global approach to studying the relationship between 
the additive and multiplicative sides of number theory. In particular, we hope 
to obtain a unified view of convolutions involving the partition function 
and number theoretic functions. To our knowledge, such an approach
has not been attempted yet. Convolutions have been used in nearly all areas of 
pure and applied mathematics. In a sense, they measure the overlap between two functions.
The idea for a unified approach for a large class of number theoretical
functions has its origin in Merca's article \cite{MERCA-LSFACTTHM} and in 
Schmidt's article \cite{SCHMIDT-LSFACTTHM}.

Perhaps our most interesting and important result, which we discovered 
computationally with \emph{Mathematica} and \emph{Maple} starting from an example formula 
given in the \emph{Online Encyclopedia of Integer Sequences} for the first column of the 
inverse matrices defined by \eqref{eqn_An_SquareMatrix_def} is stated in 
Theorem \ref{theorem_MainThm_InvMatrixDivSums}. 
The theorem provides an exact divisor sum formula for the 
inverse matrix entries, $s_{n,k}^{(-1)}$, involving a M\"obius transformation of the 
shifted Euler partition function, $p(n-k)$. 
This result is then employed to formulate new exact finite (divisor) sum formulas for 
each of the Lambert series functions, $a_n$, from the special cases in 
\eqref{eqn_WellKnown_LamberSeries_Examples}. 
These formulas are important since there are rarely such simple and universal identities 
expressing formulas for an entire class of special arithmetic functions considered in the 
context of so many applications in number theory and combinatorics. 
Generalizations, further applications, and topics for future research based on our 
work in this article are suggested in Section \ref{Section_Concl}. 

\section{Exact and recursive formulas for the inverse matrices} 
\label{Section_ExactRecFormulas}

\begin{prop}[Recursive Matrix-Product-Like Formulas] 
\label{prop_RecMatrix-Prod-Like_Formulas} 
We let $s_{i,j} := s_e(i, j) - s_o(i, j)$ denote the terms in the original 
matrices, $A_n$, from Schmidt's article and let $s_{i,j}^{(-1)}$ denote the 
corresponding entries in the inverse matrices, $A_n^{-1}$. 
Then we have that 
\begin{align*} 
s_{n,j}^{(-1)} & = - \sum_{k=1}^{n-j} s_{n,n+1-k}^{(-1)} \cdot s_{n+1-k,j} + 
     \delta_{n,j} \\ 
     & = 
     - \sum_{k=1}^{n-j} s_{n,n-k} \cdot s_{n-k,j}^{(-1)} + \delta_{n,j} \\ 
     & = 
     -\sum_{k=1}^{n} s_{n,k-1} \cdot s_{k-1,j}^{(-1)} + \delta_{n,j}. 
\end{align*} 
\end{prop}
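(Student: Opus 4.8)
The plan is to derive all three identities from the single matrix equation $A_n A_n^{-1} = A_n^{-1} A_n = I_n$, carefully tracking which entries of the (finite) matrices actually contribute to a given row/column dot product. The key structural fact I would exploit is that the matrices $A_n$ are lower triangular: since $s_{i,j} = [q^i]\, \frac{q^j}{1-q^j}(q;q)_\infty$ vanishes whenever $j > i$, we have $s_{i,j} = 0$ for $j > i$, and likewise $s_{i,j}^{(-1)} = 0$ for $j > i$ because the inverse of a lower triangular matrix (with nonzero diagonal, here all $1$'s) is again lower triangular with the same diagonal. This triangularity is exactly what truncates the infinite-looking sums into the finite ranges appearing in the statement, and it is the fact I expect to need most often.

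First I would establish the first identity. Fix $n \geq j$ and expand the $(n,j)$ entry of $A_n^{-1} A_n = I_n$:
\begin{align*}
\delta_{n,j} = \sum_{\ell = 1}^{n} s_{n,\ell}^{(-1)} \, s_{\ell, j}.
\end{align*}
By triangularity the summand vanishes unless $j \leq \ell \leq n$, and the $\ell = n$ term is $s_{n,n}^{(-1)} s_{n,j} = s_{n,j}$ since $s_{n,n}^{(-1)} = 1$. Splitting that term off and reindexing the remaining range $j \leq \ell \leq n-1$ by $\ell = n+1-k$ with $k$ running from $2$ to $n+1-j$ — wait, more carefully: $\ell$ from $j$ to $n-1$ corresponds to $k = n+1-\ell$ from $2$ to $n+1-j$; but the paper's sum runs $k=1$ to $n-j$, i.e. $\ell = n+1-k$ from $n$ down to $j+1$. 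So in fact I should split off the $\ell = j$ term differently — the cleanest is: the term $\ell = n$ gives $s_{n,j}$ (wait, that is the one we keep on the left). Let me instead split off $\ell$ such that $s_{\ell,j}^{(-1)}$ appears, i.e. use $A_n A_n^{-1} = I$ for the second identity and reserve $A_n^{-1}A_n = I$ here but split off the $\ell = j$ term: $s_{n,j}^{(-1)} s_{j,j} = s_{n,j}^{(-1)}$. Rearranging gives $s_{n,j}^{(-1)} = \delta_{n,j} - \sum_{\ell=j+1}^{n} s_{n,\ell}^{(-1)} s_{\ell,j}$, and reindexing $\ell = n+1-k$ (so $k = 1,\dots,n-j$) yields exactly the first displayed line.

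Next, the second identity follows from the same computation applied to $A_n A_n^{-1} = I_n$: expand the $(n,j)$ entry as $\sum_{\ell} s_{n,\ell}\, s_{\ell,j}^{(-1)}$, split off $\ell = n$ (contributing $s_{n,j}^{(-1)}$ since $s_{n,n} = 1$), and reindex the remaining sum $\ell = n-k$ for $k = 1,\dots,n-j$, giving the second line. The third identity is then a cosmetic reindexing of the second: write $\ell = k-1$ so that $k = \ell + 1$ ranges over $2,\dots,n$ as $\ell$ ranges over $1,\dots,n-1$; the terms with $\ell < j$ (equivalently $k-1 < j$) vanish because $s_{k-1,j}^{(-1)} = 0$ there, and the $\ell = n-1$... actually the $\ell$ from $1$ to $n-1$ with the $k=1$ term ($\ell = 0$) being vacuous, so the sum $\sum_{k=1}^{n}$ is really $\sum_{\ell=0}^{n-1}$ and the nonzero contributions match those of line two after noting $s_{n,n} s_{n,j}^{(-1)}$ is absent because the range stops at $k=n$ i.e. $\ell = n-1$. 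One small consistency check I would include: the $\ell = n$ term $s_{n,n-k}$ with $k=0$ is deliberately excluded so that $s_{n,j}^{(-1)}$ is expressed purely in terms of strictly-earlier inverse entries, which is what makes these genuine recurrences.

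The main obstacle is purely bookkeeping: getting the summation index substitutions and the split-off diagonal terms exactly right so that the finite ranges $\sum_{k=1}^{n-j}$ and $\sum_{k=1}^{n}$ come out matching the statement verbatim, and verifying in each line that the terms silently dropped (those outside the stated range) are precisely the ones killed by lower-triangularity. There is no analytic or combinatorial difficulty beyond this — the content is entirely that $A_n$ is lower unitriangular and that block form \eqref{eqn_AnInv_BlockMatrixRecForm} is consistent across $n$, so the same entry $s_{i,j}^{(-1)}$ is well-defined independent of which $A_n^{-1}$ ($n \geq i$) one reads it from.
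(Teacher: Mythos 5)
Your proposal is correct and is essentially the paper's own argument: the paper likewise derives all three recurrences from the inversion identity $\sum_{\ell} s_{p,\ell}\, s_{\ell,k}^{(-1)} = \sum_{\ell} s_{p,\ell}^{(-1)}\, s_{\ell,k} = \delta_{p,k}$ together with the fact that $A_n$ is lower triangular with unit diagonal, only stated far more tersely than your worked-out index bookkeeping (which, despite the mid-course corrections, lands on the right splittings and reindexings).
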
 
\begin{proof} 
The proof follows from the fact that for any $n \times n$ invertible matrices, 
$A_n$ and $A_n^{-1}$, with entries given in the notation above, we have the following 
inversion formula for all $1 \leq k,p \leq n$: 
\[
\sum_{j=1}^n s_{p,j} \cdot s_{j,k}^{(-1)} = \sum_{j=1}^n s_{p,j}^{(-1)} \cdot s_{j,k} = 
     \Iverson{p = k}. 
\] 
It is easy to see that the 
matrix, $A_n$, is lower triangular with ones on its diagonal for all $n \geq 1$ 
so that we may rearrange terms as in the formulas. We note that this property 
implies that $s_{i,j}^{(-1)} \equiv 0$ whenever $j < i$ by the 
adjoint (or adjugate) cofactor expansion for the inverse of a matrix. 
\end{proof} 

We can use the second of the two formulas given in the proposition 
repeatedly to obtain the 
following recursive, and then exact sums for the inverse matrix entries: 
\begin{align*} 
s_{i,j}^{(-1)} & = -\sum_{k=1}^i \sum_{k_2=1}^{k-1} \sum_{k_3=1}^{k_2-1} 
     s_{i,k-1} \cdot s_{k-1,k_2-1} \cdot s_{k_2-1,k_3-1} \cdot s_{k_3-1,j}^{(-1)} \\ 
     & \phantom{=\sum\sum } + 
     \sum_{k=j+2}^i s_{i,k-1} \cdot s_{k-1,j} - s_{i,j} + \delta_{i,j}. 
\end{align*} 
By inductively extending the expansions in the previous equation and noticing that the 
product terms in the multiple nested sums resulting from this procedure are 
eventually zero, we obtain the result in the next corollary.  

\begin{cor}[An Exact Nested Formula for the Inverse Matrices] 
\label{cor_exact_multsum_formula_for_sijinv} 
Let the notation for the next multiple, nested sums be defined as 
\begin{align*}
\Sigma_m(i, j) & := \underset{\text{$m$ total sums}}{\underbrace{\sum_{k_1=j+2}^i 
     \sum_{k_2=j+2}^{k_1-1} \cdots \sum_{k_m=j+2}^{k_{m-1}-1}}} 
     s_{i,k_1-1} \cdot s_{k_1-1,k_2-1} \times \cdots \times s_{k_m-1,j}. 
\end{align*} 
Then we may write an exact expansion for the inverse matrix entries as 
\begin{align*} 
s_{i,j}^{(-1)} & = \delta_{i,j} - s_{i,j} + \Sigma_1(i, j) - \Sigma_2(i, j) + 
     \cdots + (-1)^{i+j+1} \Sigma_{i-j}(i, j). 
\end{align*} 
\end{cor}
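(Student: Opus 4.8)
The plan is to prove the corollary by induction on $i - j$, iterating the second recursion from Proposition \ref{prop_RecMatrix-Prod-Like_Formulas}, namely $s_{n,j}^{(-1)} = -\sum_{k=1}^{n} s_{n,k-1} \cdot s_{k-1,j}^{(-1)} + \delta_{n,j}$, which we rewrite (using $s_{n,n-1+1} = s_{n,n}$ would vanish off-diagonal, so the $k = n+1$ style term drops) more conveniently as
\[
s_{i,j}^{(-1)} = \delta_{i,j} - s_{i,j} + \sum_{k=j+2}^{i} s_{i,k-1}\,\bigl(-s_{k-1,j}^{(-1)}\bigr),
\]
where the truncation of the lower limit to $k = j+2$ is justified because $s_{k-1,j}^{(-1)} = 0$ for $k - 1 < j$ (triangularity, noted in the proposition) and the $k-1 = j$ term contributes $s_{i,j}\cdot s_{j,j}^{(-1)} = s_{i,j}$, which we peel off separately; one must also check the degenerate case $i = j$ (empty sum, gives $\delta_{i,j}$) and $i = j+1$ (the sum is empty, so $s_{j+1,j}^{(-1)} = -s_{j+1,j}$, matching $\Sigma_1(j+1,j)$ being empty). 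First I would establish the base case $i - j \le 1$ directly from the displayed identity. Then, assuming the claimed formula for all pairs with smaller index gap, I substitute the induction hypothesis for $s_{k-1,j}^{(-1)}$ (valid since $(k-1) - j < i - j$ in the range $k \le i$) into the sum.

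Next I would carry out the bookkeeping of the telescoping/reindexing. Writing $s_{k-1,j}^{(-1)} = \delta_{k-1,j} - s_{k-1,j} + \sum_{\ell \ge 1} (-1)^{\ell+1}\Sigma_\ell(k-1,j)$ and plugging in, the $\delta_{k-1,j}$ term is killed by the summation range $k \ge j+2$; the $-s_{k-1,j}$ term produces $\sum_{k=j+2}^i s_{i,k-1} s_{k-1,j} = \Sigma_1(i,j)$ with the correct sign; and the term $s_{i,k-1}\cdot(-1)^{\ell+1}\Sigma_\ell(k-1,j)$ summed over $k$ from $j+2$ to $i$ reconstitutes exactly $(-1)^{\ell+1}\Sigma_{\ell+1}(i,j)$, because prepending the index $k_1 = k$ (with $j+2 \le k \le i$) and the factor $s_{i,k-1}$ to the $\ell$-fold nested sum defining $\Sigma_\ell(k-1,j)$ is precisely the definition of $\Sigma_{\ell+1}(i,j)$. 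Tracking the outer minus sign in front of the whole sum flips $(-1)^{\ell+1}$ to $(-1)^{\ell+2} = (-1)^{(\ell+1)+1}$, so the signs line up with the claimed alternating pattern.

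Finally I would address termination: the induction hypothesis expansion of $s_{k-1,j}^{(-1)}$ runs only up to $\Sigma_{(k-1)-j}(k-1,j)$, and since $k - 1 \le i - 1$ this contributes to $\Sigma_\ell(i,j)$ only for $\ell \le i - j$; conversely $\Sigma_m(i,j)$ with $m > i - j$ is an empty sum (the innermost index would have to exceed a range that has collapsed), so the series genuinely truncates at $m = i - j$ with leading sign $(-1)^{i-j+1} = (-1)^{i+j+1}$, matching the statement. I would remark that the ``eventually zero'' claim preceding the corollary in the excerpt is exactly this range-collapse observation: a nested sum $\sum_{k_1=j+2}^i \sum_{k_2=j+2}^{k_1-1}\cdots$ with more than $i - j - 1$ strictly-decreasing constraints below $i$ and above $j+1$ has no admissible tuples.

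I expect the main obstacle to be purely notational rather than conceptual: getting the index shifts in the nested sums to match up cleanly — in particular verifying that prepending one summation variable to $\Sigma_\ell(k-1,j)$ and then summing over that variable yields $\Sigma_{\ell+1}(i,j)$ with the stated limits $k_1 = j+2$ to $i$ and $k_{t+1}$ from $j+2$ to $k_t - 1$ — and making sure the sign tracking through the nested substitution is done consistently. A secondary, minor subtlety is handling the off-by-one between ``$s_{k-1,j}^{(-1)}$'' and the running index $k$ versus $k-1$ uniformly, and confirming the boundary terms ($k - 1 = j$ and $k - 1 = j+1$) are accounted for exactly once; I would dispatch these in the base-case discussion so the inductive step stays clean.
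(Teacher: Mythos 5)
Your proposal is correct and takes essentially the same route as the paper, which likewise dispatches the corollary by induction together with repeated application of the recurrence of Proposition \ref{prop_RecMatrix-Prod-Like_Formulas}; your substitution of the inductive hypothesis and the reindexing step showing $\sum_{k=j+2}^{i} s_{i,k-1}\,\Sigma_{\ell}(k-1,j) = \Sigma_{\ell+1}(i,j)$, with the sign flip from the outer minus, is precisely the bookkeeping the paper leaves unwritten. The only blemish is the diagonal case $i=j$: there the stated expansion itself returns $\delta_{i,i} - s_{i,i} = 0$ rather than $s_{i,i}^{(-1)} = 1$ (a defect of the statement, which is really meant for $i>j$), so your parenthetical that this case ``gives $\delta_{i,j}$'' verifies the recurrence rather than the displayed formula; for $i \geq j+1$, the substantive range, your argument is complete and correct.
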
 
\begin{proof} 
The proof is easily obtained by induction on $j$ and repeated applications of the 
third recurrence relation stated in 
Proposition \ref{prop_RecMatrix-Prod-Like_Formulas}. 
\end{proof} 

The terms in the multiple sums defined in the corollary 
are reminiscent of the formula for the 
multiplication of two or more matrices. 
We may thus potentially obtain statements of more productive exact results providing expansions 
of these inverse matrix terms by considering the nested, multiple sum formulas in 
Corollary \ref{cor_exact_multsum_formula_for_sijinv} as partial matrix products, 
though for the most part we leave the observation of such results as a topic for 
future investigation on these forms. 
However, given the likeness of the nested sums in the previous equations and in 
Corollary \ref{cor_exact_multsum_formula_for_sijinv} to 
sums over powers of the matrix $A_n$, we have computationally obtained the 
following related formula for the corresponding inverse matrices $A_n^{-1}$: 
\[
A_n^{-1} = \sum_{i=1}^{n-1} \binom{n-1}{i} (-1)^{i+1} A_n^{i-1},\ n \geq 2. 
\] 
We do not provide the proof by induction used to formally prove this 
identity here due to the complexity of the forms of the powers of the 
matrix $A_n$ which somewhat limit the utility of the formula at this point. 
We also notice that the corollary expresses the complicated inverse entry functions 
as a sum over products of sequences with known and comparatively simple generating 
functions stated in the introduction \cite[\cf Cor.\ 4.3]{MERCA-LSFACTTHM}. 
The results in Section \ref{Section_SomeConjs} 
provide a more exact representation of the entries of these inverse 
matrices for all $n$ obtained by a separate method of proof. 

\section{Some experimental conjectures} 
\label{Section_SomeConjs}

\begin{figure}[ht!]
\centering
\includegraphics[width=\textwidth]{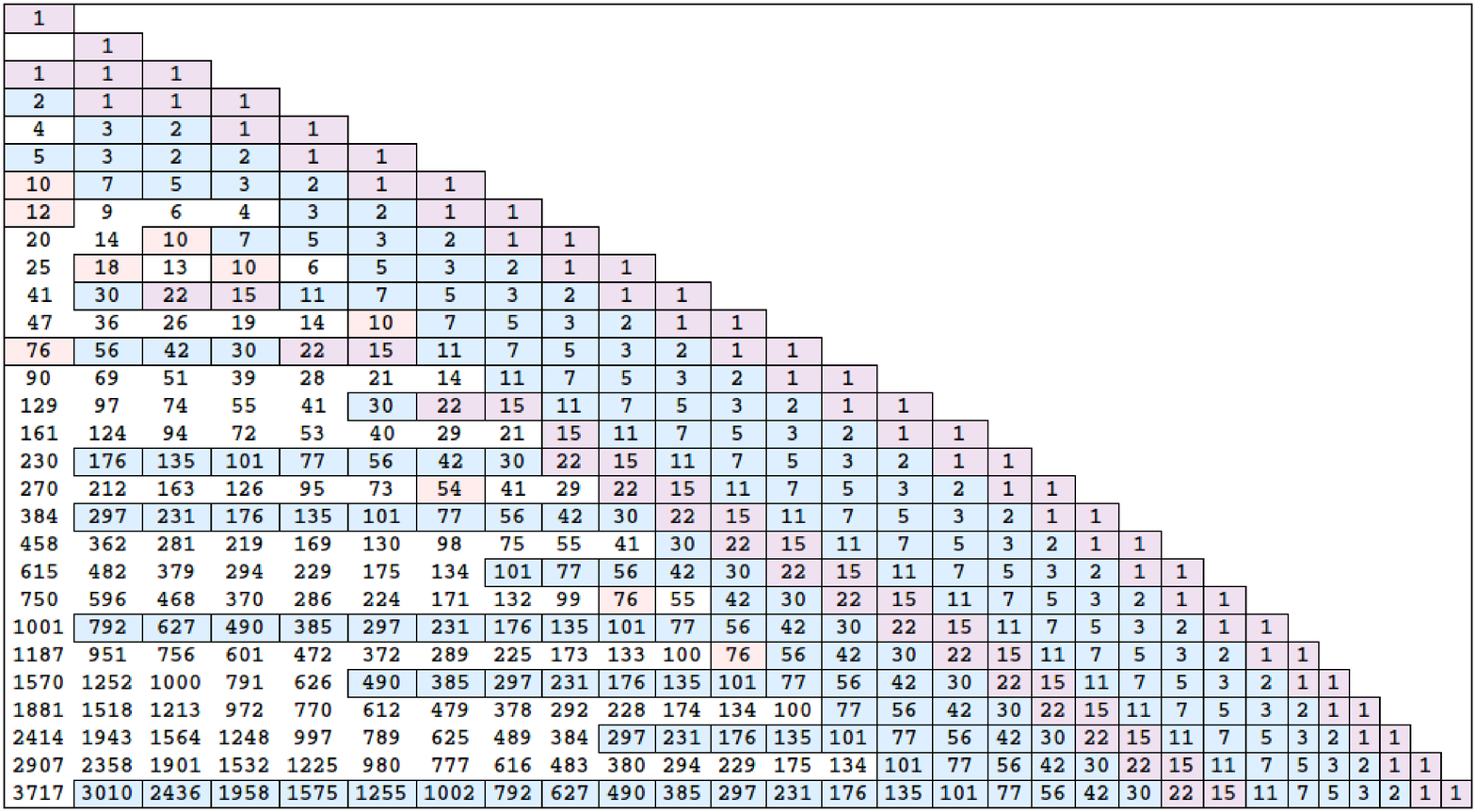}

\caption{The first $29$ rows of the function $s_{i,j}^{(-1)}$ where the values of 
         Euler's partition function $p(n)$ are highlighted in blue and the remaining 
         values of the partition function $q(n)$ are highlighted in 
         purple (in both sequences) or pink. } 
\label{figure_sijinv_first25_rows_highlighted} 

\end{figure} 

\subsection{Several figures and exact formulas} 

Based on our experimental analysis and some intuition with partition functions, 
we expect that the inverse matrix entries, $s_{i,j}^{(-1)}$, are deeply tied to the 
values of the Euler partition function $p(n)$. In fact, we are able to plot the 
first few rows and columns of the two-dimensional sequence in 
Figure \ref{figure_sijinv_first25_rows_highlighted} to obtain a highlighted listing 
of the values of special partition functions in the sequence of these matrix 
inverse entries. A quick search of the first few columns of the table in the 
figure turns up the following special entry in the online sequences database \cite{OEIS}. 

\begin{conjecture}[The First Column of the Inverse Matrices]
The first column of the inverse matrix is given by a convolution 
(dot product) of the partition function $p(n)$ and the 
M\"obius function $\mu(n)$ 
\cite[\href{https://oeis.org/A133732}{A133732}]{OEIS}. 
That is to say that 
\[ 
     s_{n,1}^{(-1)} = \sum_{d|n} p(d-1) \mu(n / d) 
     \quad\longmapsto\quad 
     \{1,0,1,2,4,5,10,12,20,25,41,47,\ldots\}, 
\]
i.e., so that by M\"obius inversion we have that 
\[ 
     p(n-1) = \sum_{d|n} s_{d,1}^{(-1)} 
     \quad\longmapsto\quad 
     \{1,1,2,3,5,7,11,15,22,\ldots\}. 
\]
\end{conjecture}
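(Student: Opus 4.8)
The plan is to recognize the first column of $A_n^{-1}$ as exactly what the inverted Lambert series factorization \eqref{eqn_fn_matrix_eqn} produces from the simplest admissible right-hand side, the vector $\vec{e}_1 = (1,0,\dots,0)^T$, and then to read off the claimed divisor sum by M\"obius inversion.

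First I would fix the candidate sequence $a_n := \sum_{d \mid n} p(d-1) \mu(n/d)$ and set $b_m := \sum_{d \mid m} a_d$, the divisor sum that \eqref{eqn_LambertSeriesfb_def} attaches to it. Interchanging the order of summation and using $\sum_{f \mid N} \mu(f) = \delta_{N,1}$ collapses the resulting double sum to $b_m = p(m-1)$; equivalently, the associated Lambert series is
\[
\sum_{n \geq 1} \frac{a_n q^n}{1-q^n} = \sum_{m \geq 1} p(m-1) q^m = \frac{q}{(q;q)_{\infty}}.
\]
Next I would compute the right-hand side vector of \eqref{eqn_fn_matrix_eqn} for this pair. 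By the pentagonal number theorem the quantity $B_{b,m}$ appearing there is precisely $[q^{m+1}]\bigl((q;q)_{\infty} \sum_{j \geq 1} b_j q^j\bigr)$, so in our case $B_{b,m} = [q^{m+1}]\bigl((q;q)_{\infty} \cdot q/(q;q)_{\infty}\bigr) = [q^{m+1}] q = \delta_{m,0}$. Hence $(B_{b,m})_{0 \leq m < n} = \vec{e}_1$, and \eqref{eqn_fn_matrix_eqn} becomes $(a_1, \dots, a_n)^T = A_n^{-1} \vec{e}_1$, i.e. $s_{i,1}^{(-1)} = a_i$ for every $1 \leq i \leq n$. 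This is the first displayed identity of the conjecture; the second, $p(n-1) = \sum_{d \mid n} s_{d,1}^{(-1)}$, then follows by applying ordinary M\"obius inversion to it.

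If one prefers to avoid quoting \eqref{eqn_fn_matrix_eqn}, essentially the same argument can be run by hand: Proposition \ref{prop_RecMatrix-Prod-Like_Formulas} records that $A_n$ is lower triangular with unit diagonal and that $s_{i,j} = [q^i]\frac{q^j (q;q)_{\infty}}{1-q^j}$ vanishes for $j > i$, so the first column of $A_n^{-1}$ in dimension $n$ is the length-$n$ truncation of a single fixed sequence $(v_i)_{i \geq 1}$ determined by the formal identity $(q;q)_{\infty} \sum_{i \geq 1} v_i \frac{q^i}{1-q^i} = q$. Reading off coefficients gives $\sum_{d \mid m} v_d = [q^m]\bigl(q/(q;q)_{\infty}\bigr) = p(m-1)$, and M\"obius inversion yields $v_n = \sum_{d \mid n} p(d-1) \mu(n/d)$. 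I do not expect a genuine obstacle here: the only point requiring care is the compatibility between the finite-matrix statement and the formal power series manipulation, which is exactly what the lower-triangularity of $A_n$ guarantees, and everything else is a routine divisor-sum computation. In fact this same computation, carried out with $b_m = \delta_{m,k}$ in place of $\delta_{m,1}$, already establishes the general formula $s_{n,k}^{(-1)} = \sum_{d \mid n} p(d-k) \mu(n/d)$ of Theorem \ref{theorem_MainThm_InvMatrixDivSums}, of which the present conjecture is the special case $k = 1$.
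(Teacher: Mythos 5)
Your argument is correct and follows essentially the same route as the paper: the statement appears there as an unproved conjecture but is the $k=1$ case of Theorem \ref{theorem_MainThm_InvMatrixDivSums}, whose proof likewise amounts to showing $\sum_{d|n} s_{d,k}^{(-1)} = p(n-k)$ via the factorization identity \eqref{eqn_Merca_LSFactorizationThm} (equivalently the matrix equation \eqref{eqn_fn_matrix_eqn} and the pentagonal-number expansion of $(q;q)_{\infty}$ that you invoke) and then applying M\"obius inversion. Your closing observation that running the computation with $q^k$ in place of $q$ yields the general formula is precisely how the paper obtains the full theorem.
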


\begin{figure}[ht!]

\begin{minipage}{\linewidth}

\begin{minipage}{\linewidth} 
\begin{center} 
\tiny
\begin{equation*} 
\boxed{ 
\begin{array}{|c||cccccccccccc|} \hline 
 \mathbf{_n\setminus^k} & 1 & 2 & 3 & 4 & 5 & 6 & 7 & 8 & 9 & 10 & 11 & 12 \\ \hline\hline
 1 & 1 & 0 & 0 & 0 & 0 & 0 & 0 & 0 & 0 & 0 & 0 & 0 \\
 2 & 1 & 1 & 0 & 0 & 0 & 0 & 0 & 0 & 0 & 0 & 0 & 0 \\
 3 & 2 & 1 & 1 & 0 & 0 & 0 & 0 & 0 & 0 & 0 & 0 & 0 \\
 4 & 3 & 2 & 1 & 1 & 0 & 0 & 0 & 0 & 0 & 0 & 0 & 0 \\
 5 & 5 & 3 & 2 & 1 & 1 & 0 & 0 & 0 & 0 & 0 & 0 & 0 \\
 6 & 7 & 5 & 3 & 2 & 1 & 1 & 0 & 0 & 0 & 0 & 0 & 0 \\
 7 & 11 & 7 & 5 & 3 & 2 & 1 & 1 & 0 & 0 & 0 & 0 & 0 \\
 8 & 15 & 11 & 7 & 5 & 3 & 2 & 1 & 1 & 0 & 0 & 0 & 0 \\
 9 & 22 & 15 & 11 & 7 & 5 & 3 & 2 & 1 & 1 & 0 & 0 & 0 \\
 10 & 30 & 22 & 15 & 11 & 7 & 5 & 3 & 2 & 1 & 1 & 0 & 0 \\
 11 & 42 & 30 & 22 & 15 & 11 & 7 & 5 & 3 & 2 & 1 & 1 & 0 \\
 12 & 56 & 42 & 30 & 22 & 15 & 11 & 7 & 5 & 3 & 2 & 1 & 1 \\
 13 & 77 & 56 & 42 & 30 & 22 & 15 & 11 & 7 & 5 & 3 & 2 & 1 \\
 14 & 101 & 77 & 56 & 42 & 30 & 22 & 15 & 11 & 7 & 5 & 3 & 2 \\
 15 & 135 & 101 & 77 & 56 & 42 & 30 & 22 & 15 & 11 & 7 & 5 & 3 \\
 16 & 176 & 135 & 101 & 77 & 56 & 42 & 30 & 22 & 15 & 11 & 7 & 5 \\
 17 & 231 & 176 & 135 & 101 & 77 & 56 & 42 & 30 & 22 & 15 & 11 & 7 \\
 18 & 297 & 231 & 176 & 135 & 101 & 77 & 56 & 42 & 30 & 22 & 15 & 11 \\
 \hline
\end{array}
}
\end{equation*}
\end{center} 
\caption*{(i) The Divisor Sums $a_{n,k}^{\prime}$} 
\end{minipage} 

\begin{center} 
\tiny 
\begin{equation*} 
\boxed{ 
\begin{array}{|c||cccccccccccc|} \hline 
 \mathbf{_n\setminus^k} & 1 & 2 & 3 & 4 & 5 & 6 & 7 & 8 & 9 & 10 & 11 & 12 \\ \hline\hline
 1 & 1 & 0 & 0 & 0 & 0 & 0 & 0 & 0 & 0 & 0 & 0 & 0 \\
 2 & 0 & 1 & 0 & 0 & 0 & 0 & 0 & 0 & 0 & 0 & 0 & 0 \\
 3 & 1 & 1 & 1 & 0 & 0 & 0 & 0 & 0 & 0 & 0 & 0 & 0 \\
 4 & 2 & 1 & 1 & 1 & 0 & 0 & 0 & 0 & 0 & 0 & 0 & 0 \\
 5 & 4 & 3 & 2 & 1 & 1 & 0 & 0 & 0 & 0 & 0 & 0 & 0 \\
 6 & 5 & 3 & 2 & 2 & 1 & 1 & 0 & 0 & 0 & 0 & 0 & 0 \\
 7 & 10 & 7 & 5 & 3 & 2 & 1 & 1 & 0 & 0 & 0 & 0 & 0 \\
 8 & 12 & 9 & 6 & 4 & 3 & 2 & 1 & 1 & 0 & 0 & 0 & 0 \\
 9 & 20 & 14 & 10 & 7 & 5 & 3 & 2 & 1 & 1 & 0 & 0 & 0 \\
 10 & 25 & 18 & 13 & 10 & 6 & 5 & 3 & 2 & 1 & 1 & 0 & 0 \\
 11 & 41 & 30 & 22 & 15 & 11 & 7 & 5 & 3 & 2 & 1 & 1 & 0 \\
 12 & 47 & 36 & 26 & 19 & 14 & 10 & 7 & 5 & 3 & 2 & 1 & 1 \\
 13 & 76 & 56 & 42 & 30 & 22 & 15 & 11 & 7 & 5 & 3 & 2 & 1 \\
 14 & 90 & 69 & 51 & 39 & 28 & 21 & 14 & 11 & 7 & 5 & 3 & 2 \\
 15 & 129 & 97 & 74 & 55 & 41 & 30 & 22 & 15 & 11 & 7 & 5 & 3 \\
 16 & 161 & 124 & 94 & 72 & 53 & 40 & 29 & 21 & 15 & 11 & 7 & 5 \\
 17 & 230 & 176 & 135 & 101 & 77 & 56 & 42 & 30 & 22 & 15 & 11 & 7 \\
 18 & 270 & 212 & 163 & 126 & 95 & 73 & 54 & 41 & 29 & 22 & 15 & 11 \\
 \hline 
\end{array}
} 
\end{equation*} 
\end{center} 
\caption*{(ii) The Divisor Sums $a_{n,k}^{\prime\prime}$} 
\end{minipage}

\caption{A comparison of the two experimental divisor sum variants, 
         $a_{n,k}^{\prime}$ and $a_{n,k}^{\prime\prime}$, defined on 
         page \pageref{page_divisor_sum_variants_ankprime_ankprimeprime}. 
         Theorem \ref{conj_ExactFormulas_for_sijinv} 
         summarizes the results shown in these two sequence plots. } 
\label{figure_divisor_sum_plots_conj2} 

\end{figure} 

We are then able to explore further with the results from this first conjecture to 
build tables of the following two formulas involving our sequence, 
$s_{n,k}^{(-1)}$, and the shifted forms of the partition function, $p(n-k)$, where 
we take $p(n) \equiv 0$ when $n < 0$: 
\label{page_divisor_sum_variants_ankprime_ankprimeprime} 
\begin{align*} 
\tag{i} 
a_{n,k}^{\prime} & := \sum_{d|n} s_{d,k}^{(-1)} \\ 
\tag{ii} 
a_{n,k}^{\prime\prime} & := \sum_{d|n} p(d-k) \mu(n / d). 
\end{align*} 
The results of plotting these sequences for the first few rows and columns of 
$1 \leq n \leq 18$ and $1 \leq k \leq 12$, respectively, 
are found in the somewhat surprising and lucky results given in 
Figure \ref{figure_divisor_sum_plots_conj2}. 
From this experimental data, we arrive at the following second conjecture 
providing exact divisor sum formulas for the inverse matrix entries. 
The corollary immediately following this conjecture is implied by a correct 
proof of these results and from the formulas established in 
\cite[\S 3]{SCHMIDT-LSFACTTHM}. 

\begin{theorem}[Exact Formulas for the Inverse Matrices]
\label{conj_ExactFormulas_for_sijinv} 
\label{theorem_MainThm_InvMatrixDivSums}
For all $n , k \geq 1$ with $1 \leq k \leq n$, we have the following 
formula connecting the inverse matrices and the Euler partition 
function: 
\begin{align} 
\label{eqn_conj2_sijinv_exact_divsum_formula} 
s_{n,k}^{(-1)} & := \sum_{d|n} p(d-k) \mu(n / d). 
\end{align}
\end{theorem}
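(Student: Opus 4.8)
The plan is to guess the inverse matrix outright and then verify it, rather than to unravel the nested sums of Section~\ref{Section_ExactRecFormulas}. The single fact I will lean on is the generating-function description of the original matrix entries recorded in the introduction, $s_{i,j} = [q^i]\, \frac{q^j}{1-q^j}(q;q)_\infty$. By linearity this says that for \emph{any} sequence $(c_j)_{j\geq 1}$,
\begin{equation*}
\sum_{j\geq 1} s_{i,j}\, c_j \;=\; [q^i]\left((q;q)_\infty \sum_{j\geq 1} c_j\, \frac{q^j}{1-q^j}\right),
\end{equation*}
all series read as formal power series in $q$. In words: applying the infinite lower-triangular matrix $A = (s_{i,j})_{i,j\geq 1}$ to a coefficient vector amounts to forming the Lambert series of that vector and multiplying through by $(q;q)_\infty$. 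Hence the $k$-th column of $A^{-1}$ must be the vector of Lambert-series coefficients of $q^k (q;q)_\infty^{-1}$; since $q^k (q;q)_\infty^{-1} = \sum_{m\geq 0} p(m)\, q^{m+k}$, this forces $\sum_{d\mid m}(A^{-1})_{d,k} = p(m-k)$, and Möbius inversion yields $(A^{-1})_{n,k} = \sum_{d\mid n}\mu(n/d)\,p(d-k)$. This is the conceptual core of the argument, and it also explains the shifted partition values observed empirically in Figure~\ref{figure_sijinv_first25_rows_highlighted}.

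To turn this into a rigorous proof I would introduce the candidate matrix $\widetilde{A}$ with entries $\widetilde{s}_{n,k} := \sum_{d\mid n} p(d-k)\,\mu(n/d)$, adopting the convention $p(j) = 0$ for $j<0$. A one-line check gives $\widetilde{s}_{k,k} = \mu(1)p(0) = 1$ and $\widetilde{s}_{n,k} = 0$ whenever $n<k$ (a nonzero summand would require a divisor $d\mid n$ with $d\geq k$), so $\widetilde{A}$ is lower triangular with unit diagonal, exactly like $A$. Next I would compute the Lambert series attached to the $k$-th column of $\widetilde{A}$:
\begin{align*}
\sum_{j\geq 1} \widetilde{s}_{j,k}\, \frac{q^j}{1-q^j}
 &= \sum_{m\geq 1} \biggl(\sum_{j\mid m}\ \sum_{d\mid j} p(d-k)\,\mu(j/d)\biggr) q^m \\
 &= \sum_{m\geq 1} p(m-k)\, q^m \;=\; \frac{q^k}{(q;q)_\infty},
\end{align*}
where the middle equality is the standard double-divisor identity $\sum_{j\mid m}\sum_{d\mid j} f(d)\mu(j/d) = \sum_{d\mid m} f(d)\sum_{e\mid(m/d)}\mu(e) = f(m)$ applied with $f(d) = p(d-k)$.

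The last step is to substitute $c_j = \widetilde{s}_{j,k}$ into the displayed identity of the first paragraph: multiplying the previous display by $(q;q)_\infty$ turns the parenthesised series into $q^k$, so $\sum_j s_{i,j}\,\widetilde{s}_{j,k} = [q^i]\,q^k = \delta_{i,k}$; that is, $A\widetilde{A} = I$ as infinite lower-triangular matrices. Truncating to the top-left $n\times n$ block and using that $A_n$ is invertible (Proposition~\ref{prop_RecMatrix-Prod-Like_Formulas}) gives $\widetilde{A}_n = A_n^{-1}$, which is precisely \eqref{eqn_conj2_sijinv_exact_divsum_formula}. I expect no essential obstacle; the only points needing care are keeping the convention $p(j)=0$ for $j<0$ consistent throughout and justifying the rearrangement of the (locally finite) formal sums. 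A purely matrix-theoretic alternative would be to check $\sum_j \widetilde{s}_{n,j}\, s_{j,k} = \delta_{n,k}$ directly via Corollary~\ref{cor_exact_multsum_formula_for_sijinv}, but the generating-function route above is both cleaner and explains \emph{why} the partition function appears.
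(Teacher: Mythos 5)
Your argument is correct and is essentially the paper's proof: both rest on Merca's factorization identity --- in your phrasing, $s_{i,j}=[q^i]\,\frac{q^j}{1-q^j}(q;q)_\infty$, so that applying $A$ to a column vector is ``form the Lambert series, then multiply by $(q;q)_\infty$'' --- combined with M\"obius inversion. The paper simply runs the computation in the opposite direction, applying the factorization theorem with $a_n := s_{n,k}^{(-1)}$ to obtain $\sum_{d\mid n} s_{d,k}^{(-1)} = p(n-k)$ and then M\"obius-inverting, whereas you M\"obius-define the candidate column, identify its Lambert series as $q^k/(q;q)_\infty$, and verify $A\widetilde{A}=I$; the underlying content is identical.
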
 
\begin{proof} 
We see that the first equation in \eqref{eqn_conj2_sijinv_exact_divsum_formula} 
which we seek to prove is equivalent to 
\begin{align*} 
\notag 
p(n-k) & := \sum_{d|n} s_{d,k}^{(-1)}. 
\end{align*} 
We next consider the variant of the Lambert series factorization theorem in 
\eqref{eqn_Merca_LSFactorizationThm} applied to the Lambert series in 
\eqref{eqn_LambertSeriesfb_def} with $a_n := s_{n,k}^{(-1)}$ for a 
fixed integer $k \geq 1$. 
In particular, the identity in \eqref{eqn_Merca_LSFactorizationThm} implies that 
\begin{align*} 
\sum_{d|n} s_{d,k}^{(-1)} & = \sum_{m=0}^n \sum_{j=1}^{n-m} \left( 
     s_o(n-m, j) - s_e(n-m, j)\right) s_{j,k}^{(-1)} \cdot p(m) \\ 
     & = 
     \sum_{m=0}^n \delta_{n-k,m} \cdot p(m) \\ 
     & = 
     p(n-k), 
\end{align*} 
where we have by our matrix formulation in \eqref{eqn_An_SquareMatrix_def} that 
\begin{align*} 
\sum_{j=1}^{m} \left(s_o(m, j) - s_e(m, j)\right) s_{j,k}^{(-1)} & = 
     \delta_{m,j}. 
\end{align*} 
Thus by M\"obius inversion, we have our key formula for the inverse matrix 
entries given in \eqref{eqn_conj2_sijinv_exact_divsum_formula}. 
\end{proof} 

We notice that the last equation given in the conjecture implies that we have a 
Lambert series generating function for the inverse matrix entries given by 
\begin{align*} 
\sum_{n \geq 1} \frac{s_{n,k}^{(-1)} q^n}{1-q^n} & = \frac{q^k}{(q; q)_{\infty}}. 
\end{align*} 
for fixed integers $k \geq 1$. 
We also note that where Merca's article \cite{MERCA-LSFACTTHM} provides the 
partition function representation for the sequence $s_{n,k}$ in the matrix 
interpretation established in \cite{SCHMIDT-LSFACTTHM}, 
the result in the theorem above effectively provides us with an exact identity for the 
corresponding sequence of inverse matrix entries, $s_{n,k}^{(-1)}$, employed as in 
Schmidt's article to obtain the new expressions for several key special multiplicative 
functions. 

One important and interesting consequence of the result in 
Theorem \ref{conj_ExactFormulas_for_sijinv} is that we have now 
completely specified several new formulas which provide exact representations for a 
number of classical and special multiplicative functions cited as examples in 
\eqref{eqn_WellKnown_LamberSeries_Examples} of the introduction. 
These formulas, which are each expanded in the 
next corollary, connect the expansions of several special multiplicative functions to 
sums over divisors of $n$ involving Euler's partition function $p(n)$. 
In particular, we can now state several specific identities for classical number theoretic 
functions which connect the seemingly disparate branches of multiplicative number theory 
with the additive nature of the theory of partitions and special partition functions. 
The results in the next corollary are expanded in the following forms: 

\begin{cor}[Exact Formulas for Special Arithmetic Functions] 
\label{cor_ExactFormulas_SpArithFns} 
For natural numbers $m \geq 0$, let the next component sequences 
defined in \cite[\S 3]{SCHMIDT-LSFACTTHM} be defined by the formulas 
\begin{align*} 
B_{\phi,m} & = m+1 - \frac{1}{8}\Biggl(8 - 5 \cdot (-1)^{u_1} - 4 \left( 
     -2 + (-1)^{u_1} + (-1)^{u_2}\right) m \\ 
     & \phantom{=m+1 - \frac{1}{8}\Biggl(8\ } + 
     2 (-1)^{u_1} u_1 (3u_1+2) + 
     (-1)^{u_2} (6u_2^2+8u_2-3)\Biggr) \\ 
B_{\mu, m} & = \Iverson{m = 0} + \sum_{b = \pm 1} 
     \sum_{k=1}^{\lfloor \frac{\sqrt{24m+25}-b}{6} \rfloor} 
     (-1)^k \Iverson{m+1-k(3k+b)/2 = 1} \\ 
B_{\lambda, m} & = 
     \Iverson{\sqrt{m+1} \in \mathbb{Z}} - \sum_{b = \pm 1} 
     \sum_{k=1}^{\lfloor \frac{\sqrt{24m+1}-b}{6} \rfloor} 
     (-1)^{k+1} \Iverson{\sqrt{m+1-k(3k+b)/2} \in \mathbb{Z}} \\ 
B_{\Lambda, m} & = 
     \log(m+1) - \sum_{b = \pm 1} 
     \sum_{k=1}^{\lfloor \frac{\sqrt{24m+1}-b}{6} \rfloor} 
     (-1)^{k+1} \log(m+1-k(3k+b)/2) \\ 
B_{|\mu|, m} & = 
     2^{\omega(m+1)} - \sum_{b = \pm 1} 
     \sum_{k=1}^{\lfloor \frac{\sqrt{24m+1}-b}{6} \rfloor} 
     (-1)^{k+1} 2^{\omega(m+1-k(3k+b)/2)} \\ 
B_{J_t, m} & = 
     (m+1)^t - \sum_{b = \pm 1} 
     \sum_{k=1}^{\lfloor \frac{\sqrt{24m+1}-b}{6} \rfloor} 
     (-1)^{k+1} (m+1-k(3k+b)/2)^t, 
\end{align*} 
where $u_1 \equiv u_1(m) := \lfloor (\sqrt{24m+1}+1)/6 \rfloor$ and 
$u_2 \equiv u_2(m) := \lfloor (\sqrt{24m+1}-1)/6 \rfloor$. 
Then we have that 
\begin{align*} 
\phi(n) & = \sum_{m=0}^{n-1} \sum_{d|n} p(d-m-1) \mu(n / d) B_{\phi,m} \\ 
\mu(n) & = \sum_{m=0}^{n-1} \sum_{d|n} p(d-m-1) \mu(n / d) B_{\mu,m} \\ 
\lambda(n) & = \sum_{m=0}^{n-1} \sum_{d|n} p(d-m-1) \mu(n / d) B_{\lambda,m} \\ 
\Lambda(n) & = \sum_{m=0}^{n-1} \sum_{d|n} p(d-m-1) \mu(n / d) B_{\Lambda,m} \\ 
|\mu(n)| & = \sum_{m=0}^{n-1} \sum_{d|n} p(d-m-1) \mu(n / d) B_{|\mu|,m} \\ 
J_t(n) & = \sum_{m=0}^{n-1} \sum_{d|n} p(d-m-1) \mu(n / d) B_{J_t,m}. 
\end{align*} 
The corresponding formulas for the average orders, $\Sigma_{a,x}$, of these special 
arithmetic functions are obtained in an initial form by summing the 
right-hand-sides of the previous equations over all $n \leq x$. 
\end{cor}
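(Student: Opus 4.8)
The plan is to obtain the corollary as a direct specialization of the matrix identity \eqref{eqn_fn_matrix_eqn} once the inverse-matrix entries appearing there are rewritten by means of Theorem \ref{theorem_MainThm_InvMatrixDivSums}. First I would record the scalar form of \eqref{eqn_fn_matrix_eqn}: for any arithmetic function $a$ whose associated Lambert series coefficient sequence is $b$ as in \eqref{eqn_LambertSeriesfb_def}, reading off the bottom ($n$-th) entry of that vector identity gives
\begin{align*}
a_n & = \sum_{j=1}^{n} s_{n,j}^{(-1)} B_{b,\,j-1} = \sum_{m=0}^{n-1} s_{n,m+1}^{(-1)} B_{b,m},
\end{align*}
where $B_{b,m}$ denotes the finite pentagonal-number transform of $b$ displayed in \eqref{eqn_fn_matrix_eqn}. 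Substituting $s_{n,m+1}^{(-1)} = \sum_{d \mid n} p(d-m-1)\,\mu(n/d)$ from Theorem \ref{theorem_MainThm_InvMatrixDivSums}, and using the stated convention $p(r) \equiv 0$ for $r < 0$ (which merely restricts the inner sum to divisors $d \geq m+1$), yields
\begin{align*}
a_n & = \sum_{m=0}^{n-1} \sum_{d \mid n} p(d-m-1)\,\mu(n/d)\, B_{b,m}.
\end{align*}
This is already the exact shape of each displayed identity in the corollary, so what remains is only to identify, for each of $\phi, \mu, \lambda, \Lambda, |\mu|, J_t$, the transform $B_{b,m}$ with the respective coefficient $B_{\phi,m}, B_{\mu,m}, \dots, B_{J_t,m}$.

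For this I would take from \eqref{eqn_WellKnown_LamberSeries_Examples} the pairs $(a_n, b_n)$ equal, in turn, to $(\mu(n), \Iverson{n=1})$, $(\lambda(n), \Iverson{\text{$n$ is a positive square}})$, $(\Lambda(n), \log n)$, $(|\mu(n)|, 2^{\omega(n)})$, and $(J_t(n), n^t)$. In each of these five cases the general definition of $B_{b,m}$ in \eqref{eqn_fn_matrix_eqn} already becomes the claimed closed form upon inserting $b$: the leading term $b_{m+1}$ becomes $\Iverson{m=0}$, $\Iverson{\sqrt{m+1} \in \mathbb{Z}}$, $\log(m+1)$, $2^{\omega(m+1)}$, or $(m+1)^t$, and the pentagonal sum $\sum_{s = \pm 1} \sum_{k} (-1)^{k+1} b_{m+1-k(3k+s)/2}$ becomes (up to rewriting $-\sum (-1)^{k+1}$ as $\sum (-1)^{k}$ in the $B_{\mu,m}$ case) the corresponding sum in the corollary. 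One only has to check that $\lfloor (\sqrt{24m+1}-s)/6 \rfloor$ is exactly the largest $k$ with $k(3k+s)/2 \leq m$, so that every argument $m+1-k(3k+s)/2$ is a positive integer (which is where $\log$ and $\omega$ are legitimately applied), and, for $B_{\mu,m}$, that replacing this upper limit by $\lfloor (\sqrt{24m+25}-s)/6 \rfloor = \lfloor (\sqrt{24(m+1)+1}-s)/6 \rfloor$ adjoins only terms whose Iverson bracket vanishes. Thus all of $B_{\mu,m}, B_{\lambda,m}, B_{\Lambda,m}, B_{|\mu|,m}, B_{J_t,m}$ are verified essentially by inspection.

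The one case requiring an actual computation is $\phi$, where $(a_n, b_n) = (\phi(n), n)$ gives
\begin{align*}
B_{\phi,m} & = (m+1) - \sum_{s = \pm 1}\ \sum_{k=1}^{\lfloor (\sqrt{24m+1}-s)/6 \rfloor} (-1)^{k+1} \left( m+1 - \frac{k(3k+s)}{2} \right).
\end{align*}
Here I would split into the $s=-1$ and $s=+1$ subsums, whose ranges are $1 \leq k \leq u_1(m)$ and $1 \leq k \leq u_2(m)$ respectively, expand the summand, and evaluate the three elementary alternating sums $\sum_k (-1)^{k+1}$, $\sum_k (-1)^{k+1} k$, and $\sum_k (-1)^{k+1} k^2$ in closed form (each expressible through $(-1)^{u_1}$, $(-1)^{u_2}$, $u_1$, $u_2$); collecting terms then reproduces the quadratic-in-$m$ expression stated for $B_{\phi,m}$. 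This is precisely the evaluation already carried out in \cite[\S 3]{SCHMIDT-LSFACTTHM}, so it may simply be cited, though I would include a short sketch of those three alternating power sums for self-containedness. The average-order assertion then follows by summing each of the six identities over $1 \leq n \leq x$ and interchanging the (finite) order of summation, which gives $\Sigma_{a,x} = \sum_{n=1}^{x} \sum_{m=0}^{n-1} \sum_{d \mid n} p(d-m-1) \mu(n/d) B_{b,m}$; writing $n = de$ one may further rearrange this as $\sum_{d \leq x} \sum_{e \leq x/d} \mu(e) \sum_{m=0}^{de-1} p(d-m-1) B_{b,m}$, although the corollary as stated asks only for the un-simplified form.

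I do not anticipate a genuine obstacle: every analytic ingredient is already contained in Theorem \ref{theorem_MainThm_InvMatrixDivSums} and in the transform definitions imported from \cite{SCHMIDT-LSFACTTHM}, and what is left is substitution together with the elementary closed-form evaluation of $B_{\phi,m}$. The only points that demand care are bookkeeping ones, namely the index shift between $m$ and $k = m+1$ in \eqref{eqn_fn_matrix_eqn} and the truncation of the divisor sum forced by the convention $p(r) = 0$ for $r < 0$; handling these correctly is exactly what makes the six right-hand sides line up with the displayed formulas.
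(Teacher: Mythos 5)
Your proposal is correct and matches the paper's (implicit) argument: the paper states this corollary as an immediate consequence of substituting the divisor-sum formula of Theorem \ref{theorem_MainThm_InvMatrixDivSums} for the entries $s_{n,m+1}^{(-1)}$ in row $n$ of the matrix equation \eqref{eqn_fn_matrix_eqn}, with the component sequences $B_{b,m}$ taken from \cite[\S 3]{SCHMIDT-LSFACTTHM} for each pair $(a_n,b_n)$ in \eqref{eqn_WellKnown_LamberSeries_Examples}. Your added bookkeeping (the $m \leftrightarrow m+1$ index shift, the $p(r)=0$ convention truncating the divisor sum, the vanishing extra terms under the $\sqrt{24m+25}$ limit in $B_{\mu,m}$, and the alternating-sum evaluation of $B_{\phi,m}$) simply makes explicit what the paper leaves to the cited reference.
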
 

We can also compare the results of the recurrence relations in the previous 
corollary to two other identical statements of these results. 
In particular, if we define the sequence 
$\{G_j\}_{j \geq 0} = \{0,1,2,5,7,12,15,22,26,35,40,51,\ldots\}$ as in 
\cite[\S 1]{MERCA-LSFACTTHM} by the formula 
\[
G_j = \frac{1}{2} \left\lceil \frac{j}{2} \right\rceil \left\lceil \frac{3j+1}{2} \right\rceil, 
\]
then by performing a divisor sum over $n$ in the previous equations, we see that the 
sequence pairs in the form of \eqref{eqn_LambertSeriesfb_def} satisfy 
\[
b_n = \sum_{k=1}^n \sum_{j=0}^{k-1} p(n-k) (-1)^{\lceil j/2 \rceil} b(k - G_j). 
\] 
We immediately notice the similarity of the recurrence relation for $b_n$ given in the 
last equation to the known result from \cite[Thm.\ 1.4]{SCHMIDT-LSFACTTHM} 
which states that 
\[
b_n = \sum_{j=0}^n (-1)^{\lceil j/2 \rceil} b_{n-G_j}, 
\] 
and which was proved by a separate non-experimental approach in the reference. 

\begin{remark}[An Experimental Conjecture]  
Since we have a well-known recurrence relation for the partition function 
given by 
\[ 
     p(n) = \sum_{k=1}^{n} (-1)^{k+1}\left(p(n-k(3k-1)/2)+p(n-k(3k+1)/2)\right), 
\] 
we attempt to formulate an analogous formula for the $s_{i,j}^{(-1)}$ using 
\eqref{eqn_conj2_sijinv_exact_divsum_formula}, which leads us to the sums 
\begin{align*} 
a_{n}^{\prime\prime\prime} & := \sum_{k=1}^n (-1)^{k+1}\left( 
     s_{n,k(3k-1)/2}^{(-1)} + s_{n,k(3k+1)/2}^{(-1)} 
     \right) \\ 
     & \quad\longmapsto\quad 
     \{1,1,2,3,6,7,14,17,27,34,55,63,\ldots\}. 
\end{align*} 
A search in the integer sequences database suggests that this sequence denotes 
the number of partitions of $n$ into relatively prime parts, or alternately, 
aperiodic partitions of $n$ \cite[\href{https://oeis.org/A000837}{A000837}]{OEIS}. 
We notice the additional, and somewhat obvious and less interesting, 
identity which follows from the recurrence relation for $p(n)$ 
given above expanded in the form of 
\[
\sum_{k=0}^n (-1)^{\lceil k/2 \rceil} s_{n,G_k}^{(-1)} = 0. 
\]
\end{remark} 

\subsection{Other properties related to the partition function} 

\begin{prop}[Partition Function Subsequences] 
Let $n$ be a positive integer. For $\lceil n/2 \rceil < k \leq n$,
$$s_{n,k}^{(-1)} = p(n-k).$$ 
The indices of the first few rows such that $$s_{n,k}^{(-1)} = p(n-k)$$ is 
true for all $1 < k \leq n$ are $\{2, 3, 5, 7, 11, 13, 17, 19, 23, 29, \ldots \}$. 
\end{prop}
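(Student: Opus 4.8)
The plan is to read off both assertions directly from the exact divisor-sum formula of Theorem \ref{theorem_MainThm_InvMatrixDivSums}, namely $s_{n,k}^{(-1)} = \sum_{d \mid n} p(d-k)\,\mu(n/d)$, used together with the standing convention $p(m) = 0$ for $m < 0$. For the first assertion I would note that any \emph{proper} divisor $d$ of $n$ satisfies $d \le n/2 \le \lceil n/2 \rceil < k$, so that $d - k < 0$ and the term $p(d-k)\,\mu(n/d)$ vanishes. Hence for $\lceil n/2 \rceil < k \le n$ the divisor sum collapses to its $d = n$ term, which contributes $p(n-k)\,\mu(1) = p(n-k)$; this is exactly the claimed identity and requires essentially no further computation.

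For the second assertion I would show that the set of rows $n$ for which $s_{n,k}^{(-1)} = p(n-k)$ holds for \emph{every} $k$ with $1 < k \le n$ is precisely the set of primes. If $n = p$ is prime, its only divisors are $1$ and $p$, and since $k \ge 2$ we have $p(1-k) = 0$, so the divisor sum reduces to the single term $d = p$, giving $s_{p,k}^{(-1)} = p(p-k)$ for all such $k$. Conversely, if $n$ is composite, let $q$ be its smallest prime factor and $d^{*} = n/q$ its largest proper divisor; compositeness forces $2 \le d^{*} < n$, so $k := d^{*}$ is a legitimate index with $1 < k \le n$. For this $k$ the only divisors $d \mid n$ with $d \ge k$ are $d^{*}$ and $n$ itself, whence $s_{n,d^{*}}^{(-1)} = p(0)\,\mu(q) + p(n-d^{*})\,\mu(1) = p(n-d^{*}) - 1 \neq p(n-d^{*})$, so $n$ fails the condition. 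The row $n = 1$ satisfies the condition vacuously and is simply omitted from the displayed list.

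I do not anticipate a serious obstacle: once Theorem \ref{theorem_MainThm_InvMatrixDivSums} is available, both parts reduce to a short case analysis organized around the question of which divisors $d$ of $n$ satisfy $d \ge k$. The one point needing a little care is the composite direction, where one must confirm that the largest proper divisor $d^{*}$ is genuinely larger than $1$ and that no divisor of $n$ lies strictly between $d^{*}$ and $n$, so that the divisor sum at $k = d^{*}$ really leaves exactly the two terms indexed by $d^{*}$ and $n$. Both facts follow at once from $d^{*} = n/q$ with $q$ the least prime factor of $n$, but they should be stated explicitly so that the ``only primes'' direction is airtight.
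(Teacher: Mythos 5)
Your proposal is correct and takes essentially the same approach as the paper: both read the first claim directly off the divisor-sum formula $s_{n,k}^{(-1)} = \sum_{d\mid n} p(d-k)\,\mu(n/d)$ from \eqref{eqn_conj2_sijinv_exact_divsum_formula}, observing that every proper divisor satisfies $d \le n/2 \le \lceil n/2\rceil < k$, so $p(d-k)=0$ and only the $d=n$ term survives. The one difference is that you additionally prove the converse half of the second assertion (that every composite row fails, via $k=n/q$ with $q$ the least prime factor), whereas the paper treats the prime-row list as an immediate observation (the prime case being handled by the next proposition) and never argues that composites fail; your extra step is sound and makes that remark rigorous.
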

\begin{proof} 
This result is immediate from the divisor sum in 
\eqref{eqn_conj2_sijinv_exact_divsum_formula} where the only 
divisor of $n$ in the range $\lceil n/2 \rceil < k \leq n$ is $n$ itself.
\end{proof} 

\begin{prop}[Partition Function Subsequences for Prime $n$] 
For $n$ prime and $1\leq k \leq n$,
	$$s_{n,k}^{(-1)} = p(n-k) - \delta_{1,k},$$ 
where $\delta_{i,j}$ is the Kronecker delta function.
\end{prop}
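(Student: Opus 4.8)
The plan is to derive this identity as an immediate specialization of Theorem~\ref{conj_ExactFormulas_for_sijinv}. That theorem gives, for all $1 \leq k \leq n$,
\[
s_{n,k}^{(-1)} = \sum_{d \mid n} p(d-k)\, \mu(n/d),
\]
with the standing convention that $p(m) \equiv 0$ for $m < 0$. When $n$ is prime, its only divisors are $d = 1$ and $d = n$, so the divisor sum collapses to exactly two terms, and I would write it out as
\[
s_{n,k}^{(-1)} = p(n-k)\,\mu(1) + p(1-k)\,\mu(n) = p(n-k) - p(1-k),
\]
using $\mu(1) = 1$ and $\mu(n) = -1$ for prime $n$.

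The remaining step is to identify the correction term $p(1-k)$ with $\delta_{1,k}$ on the range $1 \leq k \leq n$. For $k = 1$ we have $p(1-k) = p(0) = 1 = \delta_{1,1}$, and for $2 \leq k \leq n$ we have $1 - k < 0$, hence $p(1-k) = 0 = \delta_{1,k}$ by the convention on the partition function. Substituting $p(1-k) = \delta_{1,k}$ into the two-term expression yields $s_{n,k}^{(-1)} = p(n-k) - \delta_{1,k}$, as claimed.

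There is essentially no substantive obstacle here once Theorem~\ref{conj_ExactFormulas_for_sijinv} is in hand; the proof is a one-line unfolding of the divisor sum over the prime $n$ together with bookkeeping of the edge case $k = 1$, where one must be careful to use $p(0) = 1$ rather than treating the argument as out of range. The only thing worth a sentence of comment in the write-up is that this proposition refines the preceding one (Partition Function Subsequences): for $\lceil n/2 \rceil < k \leq n$ the delta term vanishes and we recover $s_{n,k}^{(-1)} = p(n-k)$, while for prime $n$ the formula additionally pins down the value at $k = 1$ and all intermediate $k$ with no divisors of $n$ strictly between $1$ and $n$ to obstruct the collapse.
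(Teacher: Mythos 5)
Your proposal is correct and matches the paper's own proof: both specialize the divisor sum formula of Theorem \ref{conj_ExactFormulas_for_sijinv} to the two divisors $1$ and $n$ of a prime $n$, use $\mu(1)=1$, $\mu(n)=-1$, and identify $p(1-k)$ with $\delta_{1,k}$ via $p(0)=1$ and the convention $p(m)=0$ for $m<0$. No gaps; the argument is the same one-line unfolding the paper gives.
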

\begin{proof} 
This result is also immediate from the divisor sum in 
\eqref{eqn_conj2_sijinv_exact_divsum_formula} where the only 
divisors of the prime $n$ are $1$ and $n$ and $p(1-k) = \delta_{k,1}$ 
by convention. In particular, we have that 
\[ 
     s_{n,k}^{(-1)} = \mu(p) p(1-k) + \mu(1) p(n-k), 
\] 
for all $1 \leq k \leq n$. 
\end{proof} 


The next two results which we initially obtained experimentally from tables of the 
matrix inverse entries follow along the same lines as the previous two propositions. 
Given the ease with which we proved the last formulas for prime $n$, we omit the 
one-line proofs of the next two results below. Note that by the formula in 
\eqref{eqn_conj2_sijinv_exact_divsum_formula}, we may also strengthen these results 
to prime powers of the form $n = p^k$ for $k \geq 1$ and any prime $p$. 

\begin{prop} 
For $n$ prime,
	$$s_{n^2,k}^{(-1)} =
	\begin{cases}
	p(n^2-k) - p(n-k), & \text{for $1\leq k\leq n$,}\\
	p(n^2-k), & \text{for $n<k\leq n^2$.}
	\end{cases}
	$$ 
\end{prop}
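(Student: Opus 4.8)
The plan is to specialize the exact divisor sum formula of Theorem~\ref{theorem_MainThm_InvMatrixDivSums}, namely $s_{N,k}^{(-1)} = \sum_{d \mid N} p(d-k)\, \mu(N/d)$, to the case $N = n^2$ with $n$ prime. First I would list the divisors of $n^2$, which are exactly $1$, $n$, and $n^2$, and record the corresponding M\"obius values $\mu(n^2) = 0$ (since $n^2$ is not squarefree), $\mu(n^2/n) = \mu(n) = -1$, and $\mu(n^2/n^2) = \mu(1) = 1$. Substituting into the divisor sum annihilates the $d = 1$ term and leaves exactly two contributions, so that for every $1 \le k \le n^2$,
\[
s_{n^2,k}^{(-1)} = \mu(1)\, p(n^2 - k) + \mu(n)\, p(n-k) = p(n^2-k) - p(n-k).
\]

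Next I would match this single closed form to the stated piecewise expression by splitting on the range of $k$. For $1 \le k \le n$ nothing further is needed, since both arguments $n^2-k$ and $n-k$ are nonnegative, and we get $p(n^2-k) - p(n-k)$ directly. For $n < k \le n^2$ we have $n - k < 0$, so the convention $p(m) \equiv 0$ for $m < 0$ adopted just before Theorem~\ref{theorem_MainThm_InvMatrixDivSums} forces $p(n-k) = 0$, and the formula reduces to $s_{n^2,k}^{(-1)} = p(n^2-k)$. This exhausts the two cases in the statement.

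I do not expect any genuine obstacle here: the argument is the exact analogue of the proof of the preceding proposition for prime $n$, with the two surviving M\"obius contributions now coming from $d = n^2$ and $d = n$ rather than from $d = n$ and $d = 1$, and the only point requiring care is the bookkeeping of the convention $p(\text{negative}) = 0$, which is precisely what produces the case split at $k = n$. The same reasoning strengthens the result to $n = p^j$ for any prime $p$ and $j \ge 1$, as remarked in the text: the divisors of $p^j$ are $1, p, \dots, p^j$, and $\mu(p^j/d)$ is nonzero only for $d = p^j$ and $d = p^{j-1}$, yielding $s_{p^j,k}^{(-1)} = p(p^j - k) - p(p^{j-1} - k)$ with the same convention on negative arguments. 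A quick sanity check against the table in Figure~\ref{figure_divisor_sum_plots_conj2}, for instance $s_{4,1}^{(-1)} = p(3) - p(1) = 2$ and $s_{9,1}^{(-1)} = p(8) - p(2) = 20$, confirms the formula.
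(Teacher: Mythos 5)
Your proposal is correct and is exactly the argument the paper intends: the proposition is one of the "one-line" consequences of Theorem \ref{theorem_MainThm_InvMatrixDivSums}, where the divisors $1,n,n^2$ of $n^2$ contribute $\mu(n^2)=0$, $\mu(n)=-1$, $\mu(1)=1$, giving $p(n^2-k)-p(n-k)$, and the convention $p(m)=0$ for $m<0$ yields the case split at $k=n$. Your prime-power extension likewise matches the paper's remark, so there is nothing to add.
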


\begin{prop} For $n$ prime,
	$$s_{2n,k}^{(-1)} =
	\begin{cases}
	p(2n-k) - p(n-k)-p(2-k)+\delta_{1,k}, & \text{for $1\leq k\leq 2$,}\\
	p(2n-k) - p(n-k), & \text{for $2 < k\leq n$,}\\
	p(2n-k), & \text{for $n<k\leq 2n$.}
	\end{cases}
	$$ 
\end{prop}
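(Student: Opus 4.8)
The plan is to evaluate the divisor sum in \eqref{eqn_conj2_sijinv_exact_divsum_formula} directly, exactly as in the proofs of the preceding propositions. I take $n$ to be an odd prime; the excluded value $n = 2$ gives $2n = 4 = 2^2$, which is already covered by the preceding proposition for $s_{n^2,k}^{(-1)}$. Then the divisors of $2n$ are precisely $1, 2, n, 2n$, and the four corresponding M\"obius values are $\mu(2n) = \mu(2)\mu(n) = 1$, $\mu(2n/2) = \mu(n) = -1$, $\mu(2n/n) = \mu(2) = -1$, and $\mu(2n/2n) = \mu(1) = 1$. Substituting these into \eqref{eqn_conj2_sijinv_exact_divsum_formula} collapses the sum to the single closed form
\[
s_{2n,k}^{(-1)} = p(2n-k) - p(n-k) - p(2-k) + p(1-k),
\]
valid for every $1 \le k \le 2n$.

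The second step is to unpack this one-line expression into the stated three-case formula using only the conventions $p(0) = 1$ and $p(m) = 0$ for $m < 0$. Since $p(1-k) = \delta_{1,k}$, that term is simply $\delta_{1,k}$ and contributes only at $k = 1$. The term $p(2-k)$ equals $1$ for $k \in \{1, 2\}$ and vanishes for $k \ge 3$, so it survives precisely in the range $1 \le k \le 2$ and may be dropped for $k > 2$; this separates off the first branch $p(2n-k) - p(n-k) - p(2-k) + \delta_{1,k}$ and leaves $p(2n-k) - p(n-k)$ on $2 < k \le n$. Finally, for $n < k \le 2n$ we additionally have $n - k < 0$, so $p(n-k) = 0$ and the middle branch collapses to $p(2n-k)$. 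Collecting the three ranges yields exactly the claimed piecewise identity.

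Since the argument is once again a single substitution into \eqref{eqn_conj2_sijinv_exact_divsum_formula}, there is no real obstacle; the only point demanding care is the small-$k$ overlap, where at $k = 1$ both $p(2-k)$ and $p(1-k) = \delta_{1,k}$ are active, so one must resist folding $p(2-k)$ into a Kronecker symbol too early, together with the routine bookkeeping of the thresholds $k = 2$ and $k = n$ at which the shifted partition arguments turn negative. As the remark following the prime-$n$ proposition indicates, the same computation goes through verbatim with $2n$ replaced by $2p^{\,j}$ for any prime $p$ and $j \ge 1$: multiplicativity of $\mu$ keeps only the four divisors $p^{j-1}, p^{j}, 2p^{j-1}, 2p^{j}$ contributing, so that $s_{2p^{j},k}^{(-1)} = p(2p^{j}-k) - p(p^{j}-k) - p(2p^{j-1}-k) + p(p^{j-1}-k)$, which decomposes into analogous cases.
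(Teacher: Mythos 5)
Your argument is correct and is exactly the one-line proof the paper deliberately omits: substitute the divisors $1,2,n,2n$ of $2n$ into the divisor-sum formula of Theorem \ref{theorem_MainThm_InvMatrixDivSums} with the Möbius values $+1,-1,-1,+1$, then split the resulting expression $p(2n-k)-p(n-k)-p(2-k)+p(1-k)$ into the three ranges using $p(m)=0$ for $m<0$ and $p(1-k)=\delta_{1,k}$. Your restriction to odd primes is a sensible (indeed necessary) refinement, since for $n=2$ the divisors of $2n=4$ are not $1,2,n,2n$ and the stated formula is instead covered by the preceding $n^2$ proposition, and your closing generalization to $2p^{\,j}$ matches the paper's remark about strengthening to prime powers.
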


A similar argument to the above can be used to show that if $q,r \in \mathbb{Z}^{+}$ are 
relatively prime positive integers, then we have that 
\[
s_{qr,k}^{(-1)} = \delta_{1,k} - p(q-k) - p(r-k) + p(qr-k), 
\]
which as we observe is another example of an additive formula we have obtained 
defining an inherently multplicative structure in terms of additive functions. 
Notably, we can use this observation to show that if the 
arithmetic function $a_n$ in \eqref{eqn_LambertSeriesfb_def} is multiplicative, 
then we have that $a_q \cdot a_r = b_{qr} - b_p - b_q + b_1$ 
for all positive integers $p,q$ such that $(p, q) = 1$. 
We can then form subsequent generalizations for products of pairwise relatively prime 
integers, $q_1,q_2,\ldots,q_m$, accordingly. 

\section{Conclusions} 
\label{Section_Concl}

\subsection{Summary} 

We have proved a unified form of the Lambert series factorization theorems from the 
references \cite{MERCA-LSFACTTHM,SCHMIDT-LSFACTTHM} which allows us to exactly express 
matrix equations between the implicit arithmetic sequences, $a_n$ and $b_n$, in 
\eqref{eqn_LambertSeriesfb_def} and in the classical special cases in 
\eqref{eqn_WellKnown_LamberSeries_Examples}. 
More precisely, we have noticed that the invertible matrices, $A_n$, from Schmidt's article are 
expressed through the factorization theorem in \eqref{eqn_Merca_LSFactorizationThm} 
proved by Merca. We then proved new divisor sum formulas involving the partition function $p(n)$ 
for the corresponding inverse matrices which 
define the sequences, $a_n$, in terms of only these matrix entries and the secondary sequence of 
$b_n$ as in the results from \cite{SCHMIDT-LSFACTTHM}. 

The primary application of our new matrix formula results is stated in 
Corollary \ref{cor_ExactFormulas_SpArithFns}. 
The corollary provides new exact finite (divisor) sum formulas for the 
special arithmetic functions, $\phi(n)$, $\mu(n)$,  $\lambda(n)$, 
$\Lambda(n)$, $|\mu(n)|$, and $J_t(n)$, and the corresponding 
partial sums defining the average orders of these functions. 
One related result not explicitly stated in Schmidt's article provides a discrete 
(i.e., non-divisor-sum) convolution for the average order of the sum-of-divisors function, 
denoted by $\Sigma_{\sigma,x} := \sum_{n \leq x} \sigma(n)$, in the form of 
\cite[\S 3.1]{SCHMIDT-LSFACTTHM} 
\begin{align*} 
\Sigma_{\sigma,x+1} & = \sum_{s = \pm 1} \left( \sum_{0 \leq n \leq x} 
     \sum_{k=1}^{\left\lfloor \frac{\sqrt{24n+25}-s}{6} \right\rfloor} 
     (-1)^{k+1} \frac{k(3k+s)}{2} \cdot p(x-n)\right). 
\end{align*} 
Other related divisor sum results that can be stated in terms of our new inverse matrix 
formulas implied by Theorem \ref{theorem_MainThm_InvMatrixDivSums} are found, 
for example, in Merca's article \cite[\S 5]{MERCA-LSFACTTHM}. 

\subsection{Generalizations} 

Merca showed another variant of the Lambert series factorization theorem 
stated in the form of \cite[Cor.\ 6.1]{MERCA-LSFACTTHM}
\begin{align*} 
\sum_{n \geq 1} \frac{a_n q^{2n}}{1-q^n} & = \frac{1}{(q; q)_{\infty}} 
     \sum_{n \geq 1} \sum_{k=1}^{\lfloor n/2 \rfloor} \left( 
     s_o(n-k, k) - s_e(n-k, k)\right) a_k \cdot q^n. 
\end{align*} 
If we consider the generalized Lambert series formed by taking derivatives of 
\eqref{eqn_LambertSeriesfb_def} from \cite{SCHMIDT-COMBSUMSBDDDIV} in the 
context of finding new relations between the generalized sum-of-divisors functions, 
$\sigma_{\alpha}(n)$, we can similarly formulate new, alternate forms of the 
factorization theorems unified by this article. 
For example, suppose that $k, m \geq 0$ are integers and 
consider the factorization theorem resulting from analysis of the sums 
\begin{align*}  
\sum_{n \geq 1} \frac{a_n q^{(m+1)n}}{(1-q^n)^{k+1}} & = \frac{1}{(q; q)_{\infty}} 
     \sum_{n \geq 1} \sum_{i=1}^{\lfloor n/(m+1) \rfloor} s_{n-m, i} 
     \frac{a_i}{(1-q^i)^k} \cdot q^n,  
\end{align*} 
so that we have the factorization theorem providing that the previous series 
are expanded by 
\begin{align*}  
\sum_{n \geq 1} \frac{a_n q^{(m+1)n}}{(1-q^n)^{k+1}} & = \frac{1}{(q; q)_{\infty}} 
     \sum_{n \geq 1} \sum_{i=1}^{\left\lfloor \frac{n}{m+1} \right\rfloor} 
     \sum_{j=0}^{\left\lfloor \frac{n-m}{i} \right\rfloor} 
     \binom{k-1+j}{k-1} s_{n-m-ji, i} \cdot a_i \cdot q^n, 
\end{align*} 
and so that when $m \geq k$ the series coefficients of these modified Lambert series 
generating functions are given by 
\begin{align*} 
\sum_{\substack{d|n \\ d \leq \left\lfloor \frac{n}{m+1} \right\rfloor}} 
     \scriptstyle{\binom{\frac{n}{d}-1-m+k}{k}} a_d & = 
     \sum_{q=0}^n \sum_{i=1}^{\left\lfloor \frac{n-q}{m+1} \right\rfloor} 
     \sum_{j=0}^{\left\lfloor \frac{n-q-m}{i} \right\rfloor} 
     \binom{k-1+j}{k-1} s_{n-q-m-ji, i} \cdot a_i \cdot p(q). 
\end{align*} 
Thus, again, as in Merca's article, the applications and results in 
Corollary \ref{cor_ExactFormulas_SpArithFns} 
can be repeated in the context of a slightly different 
motivation for considering these factorization theorems. 

\subsection{Topics for future research} 

Topics for future research based on the unified factorization theorem results we have 
proved within the article include investigating the properties of the generalizations 
defined in the last subsection, considering congruences for the partition function and the 
inverse matrix entries, $s_{n,k}^{(-1)}$, and finding useful new asymptotic formulas for the 
average orders of the special functions in Corollary \ref{cor_ExactFormulas_SpArithFns}. 

The last topic is of particular interest since we have given an explicit formula for the 
M\"obius function, $\mu(n)$, which holds for all $n \geq 0$. The problem of determining 
whether the average order, $M(x) := \sum_{n \leq x} \mu(n)$, 
of this particular special function is bounded by 
$M(x) = O(x^{1/2+\varepsilon})$ for all sufficiently small $\varepsilon > 0$ is equivalent to the 
Riemann hypothesis. In light of the significance of this problem, we must at least suggest 
our approach towards formulating new exact, non-approximate properties of this average order 
sequence for all $x \geq 1$. 

\subsection*{Acknowledgments} 

The authors thank the referees for their helpful insights and comments on 
preparing the manuscript.

\end{document}